\newcommand{\monodromy}{(\sigma_1 \sigma_2 \sigma_3 \sigma_4)^{13} \sigma_1 \sigma_4 \sigma_3 \sigma_2}
\title{A knot without a nonorientable \\ essential spanning surface} 
\author{Nathan M. Dunfield}
\address{ Dept.~of Math., MC-382 \\
          University of Illinois \\
          1409 W. Green St. \\
          Urbana, IL 61801 \\ 
          USA
}
\email{nathan@dunfield.info}
\urladdr{http://dunfield.info}
\begin{document}

\begin{abstract} 
  This note gives the first example of a hyperbolic knot in the
  \3-sphere that lacks a nonorientable essential spanning surface;
  this disproves the Strong Neuwirth Conjecture formulated by Ozawa
  and Rubinstein.  Moreover, this knot has no even strict boundary
  slopes, disproving the Even Boundary Slope Conjecture of the same
  authors.  The proof is a rigorous calculation using Thurston's
  spun-normal surfaces in the spirit of Haken's original normal surface
  algorithms.
\end{abstract}
\maketitle

\section{Introduction}

Let me start with the definitions needed to precisely state the first
result outlined in the abstract.  Throughout, all \3-manifolds will be
orientable.  A properly embedded orientable surface $S \subset M^3$ is
\emph{essential} if it is incompressible, $\partial$-incompressible,
and not isotopic (rel boundary) into $\partial M$.  A nonorientable
$S$ is defined to be essential when the boundary of a regular
neighborhood $N(S)$ is essential.  For a tame knot $K$ in $S^3$,
consider its exterior $E(K) = S^3 \setminus \mathring{N}(K)$, which is
a compact manifold with torus boundary.  Any such $K$ has a Seifert
surface, that is, there is an embedded surface $S$ in $S^3$ with
$\partial S = K$.  Moreover, there is always a Seifert surface whose
intersection with $E(K)$ is essential, e.g.~any Seifert surface of
minimal genus.  Additionally, any $K$ bounds a \emph{nonorientable}
spanning surface (just add a small half-twisted band to the boundary
of a Seifert surface).  Ichihara, Ohtouge, and Teragaito studied
nonorientable spanning surfaces and asked whether there is always such a
surface that is essential in $E(K)$
\cite{IchiharaOhtougeTeragaito2002}.  While torus knots $T_{p,q}$ with
$p$ and $q$ both odd lack nonorientable essential spanning surfaces
\cite[Example 3.11]{OzawaRubinstein2012}, Ozawa and Rubinstein
\cite{OzawaRubinstein2012} posited that these are the only such
examples:
\begin{Strong Neuwirth Conjecture}[\cite{OzawaRubinstein2012}]
\label{conj:strong}
  Every prime non-torus knot in $S^3$ has a nonorientable essential
  spanning surface $K$.
\end{Strong Neuwirth Conjecture}
As the name suggests, this conjecture implies the Neuwirth Conjecture
from 1963 \cite[Conjecture B]{Neuwirth1963}, which remains open:
\begin{Neuwirth Conjecture}
\label{conj:neuwirth}
  Every non-trivial knot $K$ in $S^3$ lies on a closed surface $F$ in
  $S^3$ where $K$ is nonseparating in $F$ and $F \cap E(K)$ is
  essential.
\end{Neuwirth Conjecture}
Part of the motivation in \cite{OzawaRubinstein2012} for formulating
Conjecture~\ref{conj:strong} is that in almost all cases where
Conjecture~\ref{conj:neuwirth} is known it is by proving this stronger
statement; please see \cite{OzawaRubinstein2012} for details and an
overview of work in this direction. My main result here disproves
Conjecture~\ref{conj:strong}, cutting off this approach to proving the
Conjecture~\ref{conj:neuwirth} in full generality:
\begin{theorem}\label{thm:main}
  Let $K$ in $S^3$ be the braid closure of $\monodromy$.  Then $K$ is
  a hyperbolic knot without a nonorientable essential spanning
  surface.
\end{theorem}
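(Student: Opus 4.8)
The plan is to reduce Theorem~\ref{thm:main} to two facts about the exterior $E(K)$: that $E(K)$ is hyperbolic, and that no essential surface in $E(K)$ has a nonzero even integer boundary slope. Granting these, hyperbolicity makes $E(K)$ irreducible and atoroidal, so $K$ is prime and non-torus, while the second fact rules out a nonorientable essential spanning surface as follows. Any spanning surface meets $\partial E(K)$ in a single curve whose slope is an integer, and a mod-two homology computation (using that the longitude is null-homologous mod $2$ in $E(K)$) shows this integer is even. By the theory of spanning surfaces used by Ozawa and Rubinstein~\cite{OzawaRubinstein2012}, a \emph{nonorientable} essential spanning surface in fact has \emph{nonzero} even slope. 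Hence the absence of nonzero even boundary slopes precludes such a surface; recording these slopes more precisely as strict boundary slopes simultaneously disproves the Even Boundary Slope Conjecture.

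First I would fix a concrete ideal triangulation $\mathcal{T}$ of $E(K)$. Starting from the braid word $\monodromy$ on five strands, I would check that the induced permutation is a $5$-cycle, so that the braid closure is a knot rather than a multi-component link, and then build $\mathcal{T}$ from a diagram of $K$. To certify that $E(K)$ is hyperbolic, I would solve Thurston's gluing equations for $\mathcal{T}$ numerically and then verify rigorously, by interval arithmetic together with a Krawczyk or Newton--Kantorovich test near the numerical solution, that a genuine solution exists with all tetrahedra positively oriented. This is the standard certified approach to hyperbolicity and is not the crux.

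The heart of the argument is the exact determination of the boundary slopes via Thurston's spun-normal surfaces in $\mathcal{T}$, carried out in the spirit of Haken's normal surface theory. I would use the fact that any essential surface with nonempty boundary can be isotoped into spun-normal form, described by nonnegative integer quadrilateral coordinates satisfying the $Q$-matching equations, and that the boundary slope of such a surface is computed exactly from those coordinates by the signed count of how the surface spirals around the cusp torus. A Haken-style enumeration---integer linear programming over the polyhedral cone of solutions---produces the finitely many vertex spun-normal surfaces, and the key structural input is that every boundary slope of an essential surface occurs among the boundary slopes of these vertices. Because all of this data is combinatorial and integral, each vertex slope is computed exactly in rational arithmetic, yielding a finite list of candidate slopes; I would then verify that every slope in the list is odd except for $0$. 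This is the decisive, fully rigorous computation.

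The main obstacle is establishing, rigorously, that the vertex enumeration in $\mathcal{T}$ really captures \emph{every} boundary slope of \emph{every} essential surface, so that no hypothetical nonorientable essential spanning surface can have a slope that slips past the list. This requires verifying that $\mathcal{T}$ satisfies the combinatorial hypotheses under which spun-normal theory guarantees that essential boundary slopes are vertex slopes (for instance, suitable efficiency of the triangulation and the correct treatment of the spun boundary-slope formula), and certifying that the vertex enumeration is complete. Secondary points to nail down are the parity lemma that a nonorientable essential spanning surface has nonzero even slope, and confirming that the only even slope appearing, namely $0$, arises solely from the fiber---which exists since $K$ is a positive braid closure and hence fibered---and is therefore not strict. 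Once completeness and exactness of the enumeration are secured, the parity check is finite and conclusive.
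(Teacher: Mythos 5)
Your overall strategy---certified hyperbolicity, exact enumeration of spun-normal vertex slopes, and a mod-2 parity argument---is the same as the paper's, but there is a genuine gap at the parity step: the claim that a nonorientable essential spanning surface must have \emph{nonzero} even boundary slope. This is not a theorem you can simply attribute to \cite{OzawaRubinstein2012}, and nothing in your argument supplies it: the mod-2 homology computation gives only evenness, and $0$ is even. Since your reduction deliberately exempts the slope $0$ (because the fiber exists and is an essential surface of slope $0$), your proof as written does not exclude a nonorientable essential spanning surface whose boundary slope is $0$, and that case is exactly where your statement ``no essential surface has nonzero even slope'' is silent.

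The paper closes this hole with a maneuver you never make: replace the nonorientable essential spanning surface $S$ by the boundary of its regular neighborhood, an orientable essential surface $Y$ with \emph{two} boundary components, each of the same (even integer) slope as $S$. A fiber of a fibration of a knot exterior over the circle is a Seifert surface with connected boundary, so $Y$ is never a fiber (and in a knot exterior virtual fibers are actual fibers), hence the theorem of \cite{Walsh2011} applies to $Y$ and its slope must lie in the computed list $\{-77, -71, -211/3, -69, -67\}$, which contains no even integer and in particular not $0$. This double-cover step does two jobs at once: it is how ``essential'' is defined for nonorientable surfaces in the first place, and it converts the problematic slope-$0$ case into a statement about an orientable non-fiber surface, which the enumeration kills. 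A related symptom of the gap: you expect $0$ to appear in the computed list as the fiber's slope, but fibers cannot be put in spun-normal form---this is precisely why Walsh's theorem carries the non-fiber hypothesis---so the actual list contains no $0$, and your bookkeeping ``the only even slope appearing is $0$, and it comes only from the fiber'' is itself a nontrivial claim needing proof rather than something the computation hands you. With the double-cover step inserted, your argument becomes the paper's.
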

The knot $K$ was introduced in \cite{CallahanDeanWeeks1999} as
$k6_{36}$ where they described it as the twisted torus knot
$T(5,17)_{4,-1}$. While its diagram in Figure~\ref{fig:knot} has some
56 crossings, its exterior is not complicated: it is the hyperbolic
\3-manifold $s800$ from \cite{CallahanHildebrandWeeks1999} which can
be triangulated with $6$ ideal tetrahedra and has volume about
$5.34821999$.

\begin{figure}
\begin{center}
\begin{tikzpicture}[scale=0.5, rotate=195, color=black, line width=2]
\foreach \angle in {0,24,...,360}{
  \begin{scope}[rotate=\angle]
    \begin{scope}
      \draw (6:4) .. controls (14:4.05) and (24:5.25) .. (48:6);
      \draw (48:6) .. controls (60:7) and (84:8.2) .. (92:8);
    \end{scope}
  \end{scope}
};
\foreach \angle in {0,24,...,360}{
  \begin{scope}[rotate=\angle]
    \draw[color=white, line width=7.5pt]
                    (30:4) .. controls (12:4) and (12:8.03) .. (-4:8.005);
    \draw (30.3:4) .. controls (12:3.95) and (12:8.03) .. (-4.15:8.005);
    \end{scope}
};
\fill[color=white] 
   (12:3) -- (-4:8.4) --(22:8.6) -- (40:8.5) -- (70:3) -- cycle;
\draw (5.1:3.99) .. controls (17.9:3.98) and (32:6.5) .. (43.5:6.93);
\draw (-2.5:7.03) .. controls (15.6:8.4) and (40:7.9) .. (41.2:7.985);
\foreach \color/\width/\cap in {white/7pt/butt, black/2pt/round}{
  \begin{scope}[color=\color, line width=\width, line cap=\cap]
    \draw (2.2:4.815) .. controls (17.5:5.95) and (31:3.8) .. (58:4.07);
    \draw (-4.5:8.005)  .. controls (17.0:8.08) and (35:5.4) ..  (47:5.94);
    \draw (-0.7:5.96) .. controls (17:7.3) and (27:4.40) .. (51.5:4.88);
  \end{scope}
};
\draw[color=black] (4.43, 1.96) -- (4.52, 2.17);
\end{tikzpicture}
\end{center}
\caption{The knot $K = k6_{36}$ is the braid closure of $\monodromy$
  as well as the twisted torus knot $T(5,17)_{4,-1}$
  \cite{CallahanDeanWeeks1999}.}
\label{fig:knot}
\end{figure}

Theorem~\ref{thm:main} is an immediate corollary of a more technical
result for which I need more definitions.  Any essential $S$ in $E(K)$
with nonempty boundary has a \emph{boundary slope}, namely the common
unoriented isotopy class of the components of $\partial S$ in the
torus $\partial E(K)$; as usual, boundary slopes are recorded as
elements of $\Q \cup \{\infty\}$ using the standard homological
framing on $\partial E(K)$.  In our context, the boundary slope of an
essential surface $S$ is \emph{strict} if $S$ is not a Seifert surface
corresponding to a fibration of $E(K)$ over the circle.  I will show:
\begin{theorem}\label{thm:tech}
  Let $K \subset S^3$ be as in Theorem~\ref{thm:main}.  Then $E(K)$
  has strict boundary slopes exactly $\{-77, -71, -211/3, -69, -67\}$.
\end{theorem}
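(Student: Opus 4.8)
The plan is to reduce the determination of boundary slopes to a finite, exact computation with Thurston's spun-normal surfaces in the given $6$-tetrahedron ideal triangulation $\mathcal{T}$ of $E(K) = s800$, and then to certify combinatorially which of the resulting candidate slopes are actually realized by essential surfaces. First I would fix $\mathcal{T}$ and record the induced triangulation of the single cusp torus $\partial E(K)$. I would work in the $18$-dimensional space of quadrilateral ($Q$-)coordinates, one coordinate for each of the three quad types in each of the six tetrahedra, cut out the solution cone by the $Q$-matching equation at each edge together with the quadrilateral compatibility conditions, and projectivize to obtain a rational polytope $\mathcal{P}$. Using exact arithmetic over $\Q$, I would enumerate the finitely many vertex spun-normal surfaces (the vertices of $\mathcal{P}$); this enumeration, together with the verification of the matching equations, is the mechanical core of the ``rigorous calculation'' advertised in the abstract.

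For each vertex surface I would compute its boundary class in $H_1(\partial E(K))$ by summing the meridian- and longitude-contributions of its quads as they spin around the cusp; writing these as two linear functionals $\mu$ and $\lambda$ on $Q$-coordinates, the boundary slope of a surface $S$ is the direction of $(\mu(S),\lambda(S))$, recorded in $\Q \cup \{\infty\}$ via the homological framing, while closed surfaces give $\mu = \lambda = 0$ and carry no boundary. This produces a finite set of candidate slopes. To obtain the upper bound --- that every strict boundary slope lies in $\{-77,-71,-211/3,-69,-67\}$ --- I would first verify that $\mathcal{T}$ is suitable, so that every essential surface in $E(K)$ is isotopic to a spun-normal surface, write such a surface as a Haken sum $S = \sum a_i V_i$ of vertex surfaces with $a_i \geq 0$, and use the Jaco--Oertel inheritance principle: in a reduced sum the essential pieces remain incompressible and $\partial$-incompressible. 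The delicate point is that, since $[\partial S] = \sum a_i [\partial V_i]$ is additive, a priori the slope of $S$ could be a ``new'' direction arising from summands of distinct slopes; I would rule this out by a minimal-weight argument showing that a slope-$r$ essential surface of least weight cannot be decomposed into essential summands of distinct slopes, since their boundary curves would intersect essentially on $\partial E(K)$ and force a $\partial$-compression. Hence the slope of $S$ agrees with that of one of the boundary-bearing vertex surfaces.

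To obtain the reverse inclusion, for each of the five slopes I would exhibit an explicit small (vertex or low-weight fundamental) spun-normal surface of that slope and certify that it is essential; strictness is then automatic, since the boundary slope of any fibration-fiber is $0$ (the orientable Seifert-surface longitude) and all five values are nonzero. Essentiality I would establish in the spirit of Haken's algorithms: a compressing or $\partial$-compressing disk can be isotoped into normal form, and I would derive a contradiction by analyzing the finitely many normal disks and their intersection pattern with the candidate surface, equivalently by checking that a least-weight representative of the class admits no such disk. Finally I would dispose of the remaining candidate slopes thrown up by the vertex enumeration --- closed vertex surfaces, which carry no boundary; the Seifert/fiber slope $0$; and any boundary-bearing vertex surface that is compressible or whose only essential representatives are fibers --- so that the strict boundary slopes are exactly the five asserted values.

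The hard part will be the essentiality certifications together with the slope-control step of the second paragraph: turning the finite vertex enumeration into a proof, rather than a heuristic, requires ruling out slope-mixing under Haken sums and producing verifiable combinatorial certificates of (in)compressibility for the relevant normal surfaces. This, rather than the enumeration itself, is where the real content of Theorem~\ref{thm:tech} lies.
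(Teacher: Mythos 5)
Your first half --- the enumeration of vertex spun-normal surfaces in exact arithmetic and the reduction of every strict boundary slope to a spun-normal slope --- is the same skeleton as the paper's Lemma~\ref{lem:normal}. Two remarks on it. First, the paper gets the normalization step not by a generic ``suitability'' check but from a concrete pair of citations: the verified geometric solution of the gluing equations for $\cT$ \cite{HoffmanEtAl2016}, which guarantees no edge is homotopically peripheral, and then Walsh's theorem \cite{Walsh2011} (plus the reduction to orientable $S$ by passing to $\partial N(S)$, and the observation that virtual fibers in knot exteriors are fibers). Second, you are right to flag slope-mixing under Haken sums as the delicate point, but your proposed fix does not transfer to this setting: a spun-normal surface meets each edge of the ideal triangulation infinitely often, so ``least weight'' in the Jaco--Sedgwick sense is not available, and your claim that summands of distinct slopes force a $\partial$-compression is an assertion, not an argument. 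Whatever resolves this must be either a computation (checking that no admissible face of the solution space carries vertex surfaces of distinct nonzero boundary classes up to proportionality) or a genuine new lemma.

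The serious divergence, and the genuine gap, is in the reverse inclusion. You propose to exhibit, for each of the five slopes, an explicit vertex or fundamental spun-normal surface and certify it essential by normalizing compressing disks ``in the spirit of Haken.'' This plan breaks down: one cannot cut $E(K)$ along a spun-normal surface and retriangulate to run Haken's compressing-disk analysis, precisely because the surface meets every tetrahedron in infinitely many disks as it spins into the cusp; the only practical certification tool here is the sufficient (not complete) incompressibility criterion of \cite[Theorem 1.1]{DunfieldGaroufalidis2012}, which the paper invokes only in a remark, for a single surface of slope $-77$. Moreover, even with a working certificate, the plan can fail for structural reasons: a slope can be a strict boundary slope while every vertex or fundamental surface realizing it is compressible, so ``certify the small surfaces'' is not guaranteed to cover all five slopes. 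The paper instead proves Lemma~\ref{lem:tech} --- $E(K)$ contains no closed essential surfaces and the Dehn fillings along the five slopes are Haken --- using the certified algorithms of \cite{BurtonOzlen2014, BurtonCowardTillmann2013} in \cite{Regina}; the standard filling argument (isotope a closed essential surface in the filled manifold to meet the core solid torus minimally; since $E(K)$ has no closed essential surface it must meet it, and the intersection with $E(K)$ is then essential with boundary slope the filling slope) produces an essential surface of each slope, and strictness is automatic exactly as you say, since all five slopes are nonzero while fibers have slope $0$. You should either adopt that route or explain how to make essentiality of specific spun-normal surfaces rigorously checkable; as written, the second half of your proposal is where the proof fails.
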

Theorem~\ref{thm:tech} implies Theorem~\ref{thm:main}
because the boundary slope of a nonorientable essential spanning
surface $S$ must be an even integer: it is integral because $S$
intersects a meridian curve exactly once, and it is even as $S$
demonstrates that the boundary of $S \cap E(K)$ is zero in
$H_1(E(K); \F_2)$.  As promised in the abstract,
Theorem~\ref{thm:tech} disproves:
\begin{Even Boundary Slope Conjecture}[\cite{OzawaRubinstein2012}]
  \label{conj:even}
  For any prime non-torus knot $K$, there is an essential surface
  $E(K)$, not a Seifert surface, whose boundary slope is a
  rational number with even numerator.
\end{Even Boundary Slope Conjecture}

\section{Proof}

The proof of Theorem~\ref{thm:tech} will be a straightforward rigorous
computation using Thurston's theory of spun-normal surfaces, which is
a version of Haken's normal surface theory tuned to the
setting of ideal triangulations of cusped manifolds (see
\cite{Tillmann2008} or \cite{DunfieldGaroufalidis2012} for general
background).  Let $\cT$ be the standard 6-tetrahedra ideal
triangulation of the exterior $E(K)$ of $K = k6_{36}$ used in
\cite{SnapPy}, which is the one given in
\cite{CallahanHildebrandWeeks1999} with its peripheral framing changed
to the homologically natural one for the complement of a knot in
$S^3$.

\begin{lemma}\label{lem:normal}
  The set of strict boundary slopes for $E(K)$ is contained in the set
  of boundary slopes of spun-normal surfaces in $\cT$, which
  is
  \[
  \{-77, -71, -211/3, -69, -67\}.
  \]
\end{lemma}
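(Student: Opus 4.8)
The plan is to split the lemma into two independent parts: a \emph{reduction} showing that every essential surface realizing a strict boundary slope can be isotoped into spun-normal form, and a \emph{finite computation} enumerating the boundary slopes actually carried by spun-normal surfaces in $\cT$. The first part gives the containment of strict boundary slopes in the set of spun-normal boundary slopes, and the second identifies that set explicitly.

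For the reduction I would invoke the standard theory of spun-normal surfaces (see \cite{Tillmann2008, DunfieldGaroufalidis2012}): if $S$ is an essential surface in $E(K)$ with nonempty boundary, then $S$ is isotopic to a spun-normal surface in $\cT$ unless $S$ is a fiber of a fibration of $E(K)$ over the circle. Since a strict boundary slope is by definition carried by an essential surface that is \emph{not} such a fiber, every strict boundary slope occurs as the boundary slope of a spun-normal surface. The only hypothesis to check is that the spinning construction applies to $\cT$, i.e.\ that $\cT$ carries no obstructing low-complexity normal objects; I expect this mild efficiency condition to follow from a direct inspection of the six-tetrahedron triangulation.

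The computation is then a finite problem in the quadrilateral coordinates $\mathbb{R}^{18}$ of the six tetrahedra of $\cT$. A spun-normal surface is a non-negative integer point satisfying the quadrilateral matching (edge) equations together with the quadrilateral condition that at most one quad type is nonzero in each tetrahedron; the solution set is a rational polyhedral cone, and projectivizing yields a polytope whose vertices are the vertex spun-normal surfaces, with every spun-normal surface lying in the rational cone they span. The boundary of a spun-normal surface determines a homology class on $\partial E(K)$ whose meridian and longitude components are \emph{linear} functions of the quad coordinates. Because this boundary map is linear and a surface has a well-defined slope only when its boundary is slope-homogeneous, a Jaco--Oertel style coherence argument shows that the boundary slope of any essential surface must agree with that of a vertex surface carrying part of its boundary; finiteness of the resulting set is then immediate from there being finitely many vertices. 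The plan is therefore to enumerate the vertex spun-normal surfaces of $\cT$, discard those with empty boundary, and read off the slope of each from the linear boundary map, producing the list $\{-77, -71, -211/3, -69, -67\}$.

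The main obstacle is not the mathematics of either step but turning the enumeration into a \emph{rigorous} proof rather than a floating-point experiment, in keeping with the Haken-style spirit of the argument. I would perform the vertex enumeration in exact rational arithmetic and certify that the computed list of vertices is complete: each claimed vertex satisfies the matching equations and quadrilateral condition exactly, and the supporting hyperplanes account for every face of the projectivized solution polytope. The boundary map must be built from the correct peripheral data --- the homologically natural meridian--longitude framing used for $\cT$ --- so that the output lands in $\Q \cup \{\infty\}$ rather than reflecting a wrong cusp basis. Once the vertex surfaces and their exact boundary classes are certified, confirming that exactly the five listed slopes occur and no others is routine.
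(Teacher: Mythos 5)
Your two-step architecture---normalize every essential non-fiber surface into spun-normal form, then rigorously enumerate the vertex spun-normal surfaces and read off their slopes---is the same as the paper's, and your computational half (exact vertex enumeration in quad coordinates with the homologically natural peripheral framing) is essentially what the paper does with SnapPy/Regina. The genuine gap is in your reduction step. The normalization theorem you want is not unconditional ``standard theory,'' and its hypothesis is not, as you suggest, a mild efficiency condition about low-complexity normal objects that one checks ``by direct inspection'' of the six tetrahedra. The relevant result is Theorem 1.6 of \cite{Walsh2011}, whose hypothesis is that no edge of $\cT$ is homotopically peripheral. The paper certifies this by invoking \cite{HoffmanEtAl2016}: a rigorous, interval-arithmetic verification that $\cT$ admits a geometric solution to the gluing and completeness equations (all ideal tetrahedra positively oriented), which implies both hyperbolicity of the interior and that no edge is peripheral. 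That verified-hyperbolicity computation is of a completely different nature from inspecting the combinatorics of $\cT$, and without it (or some substitute proof that the edges of $\cT$ are essential) your containment of strict slopes in spun-normal slopes is unproven. Two smaller omissions in the same step: Walsh's theorem applies to the orientable/two-sided setting, so one must first replace a nonorientable essential surface by the boundary of its regular neighborhood, which is essential with the same boundary slope; and Walsh's hypothesis excludes \emph{virtual} fibers rather than fibers, so one needs the observation that in the exterior of a knot in $S^3$ the only virtual fibers are actual fibers.

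A secondary caution about your computational half: the ``Jaco--Oertel style coherence argument'' you invoke---that the slope of any essential surface must agree with that of a vertex surface carrying part of its boundary---is a statement imported from ordinary (compact) normal surface theory, and it is not available off the shelf for spun-normal surfaces, where ends spiral into the cusp and Haken sums behave differently near the boundary. The paper does not argue this way; it relies on the cited spun-normal background \cite{Tillmann2008, DunfieldGaroufalidis2012} for the passage from the finite list of vertex surfaces to the set of all spun-normal boundary slopes. If you intend to prove your coherence claim from scratch, that is a genuine additional piece of mathematics that your outline asserts but does not contain.
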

Before proving this, let me point out that Lemma~\ref{lem:normal} is
already enough to establish Theorem~\ref{thm:main}. Also, you might be
troubled by the absence of $0$ on the above list of slopes; however, as $K$
is the closure of a positive braid, the manifold $E(K)$ fibers over the
circle and so $0$ need not be a \emph{strict} boundary slope.

\begin{proof}[Proof of Lemma~\ref{lem:normal}]
  Let $S$ be any essential surface in $E(K)$ which is not a fiber.  We
  will assume that $S$ is orientable, since if not we can replace it
  with the boundary of its regular neighborhood, which is an
  orientable essential surface with the same boundary slope.  By
  \cite{HoffmanEtAl2016}, there is a geometric solution to the gluing
  and completeness equations for $\cT$, that is, one where all
  tetrahedra are positively oriented.  Thus the interior of $E(K)$ is
  hyperbolic and no edge of $\cT$ is homotopically peripheral.
  Therefore, by \cite[Theorem 1.6]{Walsh2011}, the surface
  $S$ can be isotoped into spun-normal form with respect to $\cT$.
  (Technical note: the hypotheses in \cite{Walsh2011} require that $S$
  is not a \emph{virtual} fiber, but the only virtual fibers in the
  exterior of a knot in $S^3$ are actual fibers.)  Thus the boundary
  slope of $S$ is also the boundary slope of a spun-normal surface,
  proving the first part of the lemma.  To see that the spun-normal
  surfaces have only the boundary slopes listed above, one simply
  computes the boundary slopes of the finite collection of vertex
  spun-normal surfaces.  This is easily done rigorously using SnapPy
  \cite{SnapPy} or Regina \cite{Regina}; for example, in the former
  one simply does: \verb|Manifold('K6_36').normal_boundary_slopes()|
\end{proof}
\noindent
As mentioned, Lemma~\ref{lem:normal} immediately proves
Theorem~\ref{thm:main}.  The stronger statement of
Theorem~\ref{thm:tech} now follows by combining Lemma~\ref{lem:normal}
with:
\begin{lemma}\label{lem:tech}
  The exterior $E(K)$ contains no closed essential surfaces, and the
  Dehn fillings of $E(K)$ along $\{-77, -71, -211/3, -69, -67\}$ each
  yield Haken manifolds.
\end{lemma}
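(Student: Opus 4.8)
The plan is to prove Lemma~\ref{lem:tech} in two independent parts, both of which reduce to finite, rigorous computations about the six-tetrahedron triangulation $\cT$ and its spun-normal surface theory.

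First I would establish that $E(K)$ contains no closed essential surface. The natural tool is again spun-normal surface theory: any closed essential surface can be put into normal form with respect to $\cT$, and a closed normal surface is a nonnegative integer combination of the \emph{vertex} normal surfaces together with possibly vertex-linking (boundary-parallel) pieces. Since $\cT$ has only finitely many vertex solutions, one enumerates them and checks that none of the closed normal surfaces arising is both incompressible and not boundary-parallel. Concretely I would use the fact that the space of closed normal surfaces is spanned by finitely many vertices in the standard normal coordinates (as opposed to the spun coordinates used for the spanning surfaces), compute Euler characteristics, and verify that every candidate closed surface compresses or is a peripheral torus. This is precisely the sort of computation that Regina~\cite{Regina} is designed to perform, so the verification is mechanical once the triangulation is loaded.

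Second I would show that each of the five Dehn fillings $E(K)(s)$ for $s \in \{-77,-71,-211/3,-69,-67\}$ is Haken, i.e.\ contains a closed orientable incompressible surface of positive genus. The cleanest route is to exhibit, for each slope $s$, a spun-normal surface $S_s$ in $\cT$ whose boundary slope equals $s$, and then observe that capping off $\partial S_s$ with meridian disks of the filling solid torus produces a closed surface $\widehat{S_s}$ in the filled manifold. The essential point is that because $s$ is a strict boundary slope realized by an essential surface, the associated closed surface survives the filling: a standard argument (e.g.\ via the behavior of incompressible boundary-slope surfaces under Dehn filling, as in the work of Culler--Shalen and Hatcher) shows that capping off an essential surface at its own boundary slope yields an incompressible surface in the filled manifold, provided the surface is not a fiber and the filling is along that exact slope. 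I would verify for each of the five slopes that the capped-off surface has positive genus, so the filling is genuinely Haken rather than merely reducible or small.

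The main obstacle will be the second part, specifically the step that guarantees the capped-off surface $\widehat{S_s}$ is \emph{incompressible} in the filled manifold rather than compressible or boundary-parallel. Exhibiting a normal surface of the right slope is routine, but controlling what happens to incompressibility under the filling requires care: one must rule out that the surface compresses once the meridian disks are attached, which can fail if the surface bounds in the filling solid torus in a degenerate way. I would address this by appealing to the strictness of the slope (so $S_s$ is not a fiber) together with the absence of closed essential surfaces established in the first part, which constrains how any compressing disk could intersect $\widehat{S_s}$; alternatively, since all five filled manifolds are explicit, one can simply certify Hakenness for each directly in Regina by having it locate an incompressible surface in the triangulated filled manifold. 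That direct computational certification is the most reliable path and is the one I expect to carry the proof.
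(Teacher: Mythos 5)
Your closing fallback---having Regina certify directly that $E(K)$ contains no closed essential surface and that each of the five filled manifolds is Haken---is not merely ``the most reliable path''; it is exactly the paper's proof. The paper offers no hand argument for either claim: it cites the algorithms of \cite{BurtonOzlen2014, BurtonCowardTillmann2013} as implemented in \cite{Regina} and points to an ancillary script (running time under 20 seconds). Your Part 1 is this computation in sketch form, though your justification glosses a real subtlety: in an ideal triangulation, a closed normal surface need not be a sum of \emph{closed} vertex surfaces---the summands in quadrilateral coordinates can have nonempty boundaries that cancel---and the statement that it suffices to examine closed vertex surfaces is precisely the nontrivial theorem of \cite{BurtonCowardTillmann2013}, not a formal consequence of Haken sums.

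The genuine gap is the ``standard argument'' carrying your primary route for the fillings. There is no theorem saying that capping off an essential surface along its own boundary slope yields an incompressible surface provided only that the surface is not a fiber. The correct statement (Culler--Gordon--Luecke--Shalen, \emph{Dehn surgery on knots}, Ann.\ of Math.\ 125 (1987), Theorem 2.0.3) is an alternative with several outcomes: for a boundary slope $r$, either the filled manifold is Haken, or it is reducible, or the exterior contains a closed essential surface, or the surface with slope $r$ is a fiber or semi-fiber. The failure is concrete: a cable knot filled along its cabling slope is filled along the boundary slope of an essential non-fiber annulus, yet the annulus caps off to a reducing sphere and the filling is a connected sum, not Haken. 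So even after invoking your Part 1 to exclude closed essential surfaces, strictness to exclude fibers, and (say) the observation that a knot group cannot surject the infinite dihedral group to exclude semi-fibers, you are still left with proving each filled manifold is \emph{irreducible}---which the definition of Haken requires, which the capping argument never addresses, and which for hyperbolic knots is in general the open Cabling Conjecture. For these five explicit fillings irreducibility, too, must be certified by machine, which collapses your primary route back into the computational certification that is the paper's actual proof.
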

I established Lemma~\ref{lem:tech} using the breakthrough work of
\cite{BurtonOzlen2014, BurtonCowardTillmann2013} as implemented in
\cite{Regina}.  The complete script I used for this can be found at
\cite{ancillary} and the total running time was less than 20 seconds;
since Lemma~\ref{lem:tech} is not needed to prove
Theorem~\ref{thm:main}, I simply refer you to the code for details.

\subsection{Remark}
Given the original motivation for Conjecture~\ref{conj:strong}, you
might wonder whether $K$ satisfies Conjecture~\ref{conj:neuwirth},
namely that $K$ lies on a closed surface $F$ in $S^3$ where $K$ is
nonseparating in $F$ and $F \cap E(K)$ is essential.  In fact it does,
and here is one way to see this.  SnapPy finds a vertex spun-normal
surface $S$ in $\cT$ which has exactly two boundary components, each
of slope $-77$, and which is essential by \cite[Theorem
1.1]{DunfieldGaroufalidis2012}.  As the boundary slope of $S$ is an
integer, we can piece together the two boundary components of $S$ to
get a surface $F$ in $S^3$ on which $K$ lies.  The knot $K$ does not
separate $F$ because, as a vertex surface, the original surface $S$ is
connected.

\subsection{Further examples}  

The knot $K$ was found by a computer search through all 502 knots
whose complements can be triangulated with 8 or fewer tetrahedra
\cite{CallahanDeanWeeks1999, ChampanerkarKofmanPatterson2004,
  ChampanerkarKofmanMullen2014}. Other examples which lack
nonorientable essential spanning surfaces include $k5_{17}, k7_{29},
k7_{64}$ and $k8_{114}$.  The complete search took less than 10
seconds to run.

\section{Acknowledgments} 

This work was partially supported by US NSF grant DMS-1510204 and
partially conducted at the Institute for Advanced Study with the
support of the Giorgio and Elena Petronio Fellowship Fund and the Bell
Companies Fellowship Fund.  I thank the lunch table crowd at IAS for
comments and encouragement, as well as the referee for their
comments. 

{\RaggedRight 
\small
\bibliographystyle{nmd/math} 
\bibliography{\jobname}

\newcommand{\etalchar}[1]{$^{#1}$}
\begin{thebibliography}{HIKMOT}

\bibitem[BBP{\etalchar{+}}]{Regina}
B.~A. Burton, R.~Budney, W.~Pettersson, et~al.
\newblock {Regina: Software for 3-manifold topology and normal surface theory,
  version 4.96}, 1999--2015.
\newblock \url{http://regina.sourceforge.net/}

\bibitem[BCT]{BurtonCowardTillmann2013}
B.~A. Burton, A.~Coward, and S.~Tillmann.
\newblock \href{http://dx.doi.org/10.1145/2462356.2462380}{{Computing closed
  essential surfaces in knot complements}}.
\newblock In {\em Computational geometry ({S}o{CG}'13)}, pages 405--413. ACM,
  New York, 2013.
\newblock \href{http://arxiv.org/abs/arXiv:1212.1531}{{\tt arXiv:1212.1531}},
  \mathreviewsnumber{3208239}.

\bibitem[BO]{BurtonOzlen2014}
B.~A. Burton and M.~Ozlen.
\newblock {A fast branching algorithm for unknot recognition with experimental
  polynomial-time behaviour}.
\newblock Preprint 2014, 29 pages.
\newblock \href{http://arxiv.org/abs/arXiv:1211.1079}{{\tt arXiv:1211.1079}}.

\bibitem[CDW]{CallahanDeanWeeks1999}
P.~J. Callahan, J.~C. Dean, and J.~R. Weeks.
\newblock \href{http://dx.doi.org/10.1142/S0218216599000195}{{The simplest
  hyperbolic knots}}.
\newblock {\em J. Knot Theory Ramifications} {\bf 8} (1999), 279--297.
\newblock \mathreviewsnumber{1691433 (2000c:57005)}.

\bibitem[CHW]{CallahanHildebrandWeeks1999}
P.~J. Callahan, M.~V. Hildebrand, and J.~R. Weeks.
\newblock \href{http://dx.doi.org/10.1090/S0025-5718-99-01036-4}{{A census of
  cusped hyperbolic {$3$}-manifolds}}.
\newblock {\em Math. Comp.} {\bf 68} (1999), 321--332.
\newblock With microfiche supplement.
\newblock \mathreviewsnumber{1620219 (99c:57035)}.

\bibitem[CKM]{ChampanerkarKofmanMullen2014}
A.~Champanerkar, I.~Kofman, and T.~Mullen.
\newblock \href{http://dx.doi.org/10.1142/S0218216514500552}{{The 500 simplest
  hyperbolic knots}}.
\newblock {\em J. Knot Theory Ramifications} {\bf 23} (2014), 1450055, 34.
\newblock \href{http://arxiv.org/abs/arXiv:1307.4439}{{\tt arXiv:1307.4439}},
  \mathreviewsnumber{3298204}.

\bibitem[CKP]{ChampanerkarKofmanPatterson2004}
A.~Champanerkar, I.~Kofman, and E.~Patterson.
\newblock \href{http://dx.doi.org/10.1142/S021821650400355X}{{The next simplest
  hyperbolic knots}}.
\newblock {\em J. Knot Theory Ramifications} {\bf 13} (2004), 965--987.
\newblock \href{http://arxiv.org/abs/arXiv:math/0311380}{{\tt
  arXiv:math/0311380}}, \mathreviewsnumber{2101238 (2005k:57010)}.

\bibitem[CDGW]{SnapPy}
M.~Culler, N.~M. Dunfield, M.~Goerner, and J.~R. Weeks.
\newblock {Snap{P}y, a computer program for studying the topology and geometry
  of $3$-manifolds, version 2.4}, 2016.
\newblock \url{http://snappy.computop.org}

\bibitem[Dun]{ancillary}
N.~M. Dunfield.
\newblock {Ancillary file with the arXiv version of this paper}.
\newblock \href{http://arxiv.org/abs/1509.06653}{{\tt 1509.06653}}.

\bibitem[DG]{DunfieldGaroufalidis2012}
N.~M. Dunfield and S.~Garoufalidis.
\newblock
  \href{http://dx.doi.org/10.1090/S0002-9947-2012-05663-7}{{Incompressibility
  criteria for spun-normal surfaces}}.
\newblock {\em Trans. Amer. Math. Soc.} {\bf 364} (2012), 6109--6137.
\newblock \href{http://arxiv.org/abs/arXiv:1102.4588}{{\tt arXiv:1102.4588}},
  \mathreviewsnumber{2946944}.

\bibitem[HIKMOT]{HoffmanEtAl2016}
N.~Hoffman, K.~Ichihara, M.~Kashiwagi, H.~Masai, S.~Oishi, and A.~Takayasu.
\newblock \href{http://dx.doi.org/10.1080/10586458.2015.1029599}{{Verified
  computations for hyperbolic 3-manifolds}}.
\newblock {\em Exp. Math.} {\bf 25} (2016), 66--78.
\newblock \href{http://arxiv.org/abs/arXiv:1310.3410}{{\tt arXiv:1310.3410}},
  \mathreviewsnumber{3424833}.

\bibitem[IOT]{IchiharaOhtougeTeragaito2002}
K.~Ichihara, M.~Ohtouge, and M.~Teragaito.
\newblock \href{http://dx.doi.org/10.1016/S0166-8641(01)00188-2}{{Boundary
  slopes of non-orientable {S}eifert surfaces for knots}}.
\newblock {\em Topology Appl.} {\bf 122} (2002), 467--478.
\newblock \mathreviewsnumber{1911694 (2003i:57009)}.

\bibitem[Neu]{Neuwirth1963}
L.~Neuwirth.
\newblock {Interpolating manifolds for knots in {$S^{3}$}}.
\newblock {\em Topology} {\bf 2} (1963), 359--365.
\newblock \mathreviewsnumber{0156335 (27 \#\#6259)}.

\bibitem[OR]{OzawaRubinstein2012}
M.~Ozawa and J.~H. Rubinstein.
\newblock \href{http://dx.doi.org/10.4310/CAG.2012.v20.n5.a5}{{On the
  {N}euwirth conjecture for knots}}.
\newblock {\em Comm. Anal. Geom.} {\bf 20} (2012), 1019--1060.
\newblock \href{http://arxiv.org/abs/arXiv:1103.2576}{{\tt arXiv:1103.2576}},
  \mathreviewsnumber{3053620}.

\bibitem[Til]{Tillmann2008}
S.~Tillmann.
\newblock {Normal surfaces in topologically finite 3-manifolds}.
\newblock {\em Enseign. Math. (2)} {\bf 54} (2008), 329--380.
\newblock \href{http://arxiv.org/abs/arXiv:math/0406271}{{\tt
  arXiv:math/0406271}}, \mathreviewsnumber{2478091 (2010e:57024)}.

\bibitem[Wal]{Walsh2011}
G.~S. Walsh.
\newblock \href{http://dx.doi.org/10.1007/s10711-010-9529-0}{{Incompressible
  surfaces and spunnormal form}}.
\newblock {\em Geom. Dedicata} {\bf 151} (2011), 221--231.
\newblock \href{http://arxiv.org/abs/arXiv:math/0503027}{{\tt
  arXiv:math/0503027}}, \mathreviewsnumber{2780747 (2012d:57034)}.

\end{thebibliography}
}
\end{document}